\newtheorem{thm}{Theorem}[section]
\newtheorem*{thm*}{Theorem}
\newtheorem{lem}[thm]{Lemma}
\newtheorem*{prop*}{Proposition}
\newtheorem*{conj*}{Conjecture}
\newtheorem*{dfn*}{Definition}
\theoremstyle{definition}
\newtheorem{rem}[thm]{\textbf{Remark}}
\newtheorem*{rmk*}{Remark}
\newtheorem*{fact*}{Fact}
\theoremstyle{proof}
\numberwithin{equation}{section}
\newcommand{\vol}{\textrm{Vol}}
\newcommand{\norm}[1]{\left\Vert#1\right\Vert}
\newcommand{\abs}[1]{\left\vert#1\right\vert}
\newcommand{\set}[1]{\left\{#1\right\}}
\newcommand{\brac}[1]{\left(#1\right)}
\newcommand{\scalar}[1]{\left \langle #1 \right \rangle}
\newcommand{\sscalar}[1]{\langle #1 \rangle}
\newcommand{\Real}{\mathbb{R}}
\newcommand{\eps}{\epsilon}
\newcommand{\Ric}{\mbox{\rm{Ric}}}
\newcommand{\CD}{\text{CD}}
\newcommand{\II}{\text{II}}
\renewcommand{\div}{\text{div}}
\renewcommand{\S}{\mathcal{S}}
\newcommand{\Var}{\text{\rm{Var}}}
\def\Xint#1{\mathchoice
{\XXint\displaystyle\textstyle{#1}}{\XXint\textstyle\scriptstyle{#1}}{\XXint\scriptstyle\scriptscriptstyle{#1}}{\XXint\scriptscriptstyle\scriptscriptstyle{#1}}\!\int}
\def\XXint#1#2#3{{\setbox0=\hbox{$#1{#2#3}{\int}$}
\vcenter{\hbox{$#2#3$}}\kern-.5\wd0}}
\def\dashint{\Xint-}
\begin{document}

\date{}

\title{Brascamp--Lieb type inequalities on weighted Riemannian manifolds with boundary}
\author{Alexander V. Kolesnikov\textsuperscript{1} and Emanuel Milman\textsuperscript{2}}

\footnotetext[1]{Faculty of Mathematics, Higher School of Economics, Moscow, Russia. 
Supported by RFBR project 14-01-00237 and the DFG project CRC 701.
This study (research grant No 14-01-0056) was supported by The National Research University -- Higher School of Economics' Academic Fund Program in 2014/2015.
Emails: akolesnikov@hse.ru, sascha77@mail.ru.}

\footnotetext[2]{Department of Mathematics, Technion - Israel
Institute of Technology, Haifa 32000, Israel. Supported by ISF (grant no. 900/10), BSF (grant no. 2010288) and Marie-Curie Actions (grant no. PCIG10-GA-2011-304066).
The research leading to these results is part of a project that has received funding from the European Research Council (ERC) under the European Union's Horizon 2020 research and innovation programme (grant agreement No 637851).
Email: emilman@tx.technion.ac.il.}

\begingroup    \renewcommand{\thefootnote}{}    \footnotetext{2010 Mathematics Subject Classification: 53C21,58J32,58J50.}
\endgroup

\maketitle

\begin{abstract}
It is known that by dualizing the Bochner--Lichnerowicz--Weitzenb\"{o}ck formula, one obtains Poincar\'e-type inequalities on Riemannian manifolds equipped with a density, which satisfy the Bakry--\'Emery Curvature-Dimension condition (combining a lower bound on its generalized Ricci curvature and an upper bound on its generalized dimension).
When the manifold has a boundary, an appropriate generalization of the Reilly formula may be used instead. By systematically dualizing this formula for various combinations of boundary conditions of the domain (convex, mean-convex) and the function (Neumann, Dirichlet), we obtain new Brascamp--Lieb type inequalities on the manifold. 
All previously known inequalities of Lichnerowicz, Brascamp--Lieb, Bobkov--Ledoux and Veysseire are recovered, extended to the Riemannian setting and generalized into a single unified formulation, and their appropriate versions in the presence of a boundary are obtained. Our framework allows to encompass the entire class of Borell's convex measures, including heavy-tailed measures, and extends the latter class to weighted-manifolds having negative generalized dimension.
\end{abstract}

\section{Introduction}

Throughout the paper we consider a compact \emph{weighted-manifold} $(M,g,\mu)$, namely a compact smooth connected and oriented $n$-dimensional Riemannian manifold $(M,g)$ with boundary $\partial M$, equipped with a measure:
\[
\mu = \exp(-V) d {\vol}_M ~,
\] 
where $\vol_M$ is the Riemannian volume form on $M$ and $V \in C^2(M)$ is twice continuously differentiable. The boundary $\partial M$ is assumed to be a $C^2$ manifold with outer unit-normal $\nu = \nu_{\partial M}$. The corresponding symmetric diffusion operator with invariant measure $\mu$, which is called the weighted-Laplacian, is given by:
\[
L  = L_{(M,g,\mu)} := \exp(V) \div( \exp(-V) \nabla) = \Delta - \scalar{\nabla V,\nabla} ~,
\]
where $\scalar{\cdot,\cdot}$ denotes the Riemannian metric $g$,  $\nabla = \nabla_g$ denotes the Levi-Civita connection, $\div = \div_g =  tr(\nabla \cdot)$ denotes the Riemannian divergence operator, and $\Delta = \div \nabla$ is the Laplace-Beltrami operator.  Indeed, note that with these generalized notions, the usual integration by parts formula is satisfied for $f,g \in C^2(M)$: \[
\int_M L(f) g d\mu = \int_{\partial M} f_\nu g d\mu_{\partial M} - \int_M \scalar{\nabla f,\nabla g} d\mu = \int_{\partial M} (f_\nu g - g_\nu f) d\mu_{\partial M} +  \int_M L(g) f d\mu ~,
\]
where  $u_\nu = \nu \cdot u$ and $\mu_{\partial M} := \exp(-V) d\vol_{\partial M}$. 

The second fundamental form $\II = \II_{\partial M}$ of $\partial M \subset M$ at $x \in \partial M$ is as usual (up to sign) defined by $\II_x(X,Y) = \scalar{\nabla_X \nu, Y}$, $X,Y \in T \partial M$. The quantities
\[
H_g(x) := tr(\II_x) ~,~ H_\mu (x) := H_g(x) - \scalar{\nabla V(x) ,\nu(x)} ~,
\]
are called the Riemannian mean-curvature and \emph{generalized} mean-curvature of $\partial M$ at $x \in \partial M$, respectively. It is well-known that $H_g$ governs the first variation of $\vol_{\partial M}$ under the normal-map $t \mapsto \exp(t \nu)$, and similarly $H_\mu$ governs the first variation of $\exp(-V) d\vol_{\partial M}$ in the weighted-manifold setting, see e.g. \cite{EMilmanGeometricApproachPartI}. 
\medskip

In the purely Riemannian setting, it is classical that positive lower bounds on the Ricci curvature tensor $\Ric_g$ and upper bounds on the topological dimension $n$ play a fundamental role in governing various Sobolev-type inequalities on $(M,g)$, see e.g. \cite{ChavelEigenvalues,GallotBourbaki,GallotIsoperimetricInqs,LiYauEigenvalues,YauIsoperimetricConstantsAndSpectralGap} and the references therein. In the weighted-manifold setting, the pertinent information on \emph{generalized} curvature and \emph{generalized} dimension may be incorporated into a single tensor, which was put forth by Bakry and \'Emery \cite{BakryEmery,BakryStFlour} following Lichnerowicz \cite{Lichnerowicz1970GenRicciTensorCRAS,Lichnerowicz1970GenRicciTensor}. The $N$-dimensional Bakry--\'Emery Curvature tensor ($N \in (-\infty,\infty]$) is defined as (setting $\Psi = \exp(-V)$):
\begin{equation} \label{eq:Ric-def}
\Ric_{\mu,N} := \Ric_g + \nabla^2 V - \frac{1}{N-n} d V\otimes d V = \Ric_g - (N-n) \frac{\nabla^2  \Psi^{\frac{1}{N-n}}}{\Psi^{\frac{1}{N-n}}} ~, 
\end{equation}
and the Bakry--\'Emery Curvature-Dimension condition $\CD(\rho,N)$, $\rho \in \Real$, is the requirement that as 2-tensors on $M$:
\[
\Ric_{\mu,N} \geq \rho g ~.
\]
Here $\nabla^2 V$ denotes the Riemannian Hessian of $V$. Note that the case $N=n$ is only defined when $V$ is constant, i.e. in the classical non-weighted Riemannian setting where $\mu$ is proportional to $\vol_M$, in which case $\Ric_{\mu,n}$ boils down to $\Ric_g$. When $N= \infty$ we set:
\[
\Ric_\mu := \Ric_{\mu,\infty} = \Ric_g + \nabla^2 V ~.
\]

It is customary to only treat the case when $N \in [n,\infty]$, with the interpretation that $N$ is an upper bound on the ``generalized dimension" of the weighted-manifold $(M,g,\mu)$; however, our method also applies with no extra effort to the case when $N \in (-\infty,0]$, and so our results are treated in this greater generality, which in the Euclidean setting encompasses the entire class of Borell's convex (or ``$1/N$-concave") measures \cite{BorellConvexMeasures} (cf. \cite{BrascampLiebPLandLambda1,BobkovLedouxWeightedPoincareForHeavyTails}). It will be apparent that the more natural parameter is actually $1/N$, with $N=\infty,0$ interpreted as $1/N = 0,-\infty$, respectively, and so our results hold in the range $1/N \in [-\infty,1/n]$. As $d V\otimes d V$ appearing in (\ref{eq:Ric-def}) is a positive semi-definite tensor, the $\CD(\rho,N)$ condition is clearly monotone in $\frac{1}{N-n}$ and hence in $\frac{1}{N}$ in the latter range, so for all  $N_+ \in [n,\infty], N_- \in (-\infty,0]$:
\[
\CD(\rho,n) \Rightarrow \CD(\rho,N_+) \Rightarrow \CD(\rho,\infty) \Rightarrow \CD(\rho,N_-) \Rightarrow \CD(\rho,0) ~;
\]
note that $\CD(\rho,0)$ is the weakest condition in this hierarchy. 
 It seems that outside the Euclidean setting, this extension of the Curvature-Dimension condition to the range $N \leq 0$ has not attracted much attention in the weighted-Riemannian and more general metric-measure space setting (cf. \cite{SturmCD12,LottVillaniGeneralizedRicci}); an exception is the work of Ohta and Takatsu \cite{OhtaTakatsuEntropies1,OhtaTakatsuEntropies2}. We expect this gap in the literature to be quickly filled (in fact, concurrently to posting our work on the arXiv, Ohta \cite{Ohta-NegativeN} has posted a first attempt of a systematic treatise of the range $N \leq 0$, and subsequently other authors have also begun treating this extended range \cite{EMilmanNegativeDimension,KlartagLocalizationOnManifolds,Wylie-SectionalCurvature,KennardWylie-WeightedSectionalCurvature,EMilmanHarmonicMeasures}). 

A convenient equivalent form of the $\CD(\rho,N)$ condition may be formulated as follows. Let $\Gamma_2$ denote the iterated carr\'e-du-champ operator of Bakry--\'Emery:
\[
 \Gamma_2(u) := \norm{\nabla^2 u}^2 + \scalar{\Ric_\mu \; \nabla u,\nabla u} ~,
 \]
 where  $\norm{\nabla^2 u}$ denotes the Hilbert-Schmidt norm of $\nabla^2 u$. Then the $\CD(\rho,N)$ condition is equivalent when $1/N \in (-\infty,1/n]$ (see \cite[Section 6]{BakryStFlour} for the case $N \in [n,\infty]$ or Lemma \ref{lem:CS} in the general case) to the requirement that:
\begin{equation}
\label{cdrhon}
\Gamma_2(u) \geq \rho \abs{\nabla u}^2 + \frac{1}{N} (L u)^2 \;\;\; \forall u \in C^2(M) ~.
\end{equation}

Denote by $\S_0(M)$ the class of functions $u$ on $M$ which are $C^2$ smooth in the interior of $M$ and $C^1$ smooth on the entire compact $M$. 
Denote by $\S_N(M)$ the subclass of functions which in addition satisfy that $u_\nu$ is $C^1$ smooth on $\partial M$. 
The main tool we employ in this work is the following: 
\begin{thm}[Generalized Reilly Formula] \label{thm:Reilly}
For any function $u \in \S_N(M)$: 
\begin{multline}
\label{Reilly}
\int_M (L u)^2 d\mu = \int_M \norm{\nabla^2 u}^2 d\mu + \int_M \scalar{ \Ric_\mu \; \nabla u, \nabla u} d\mu + \\
\int_{\partial M} H_\mu (u_\nu)^2 d\mu_{\partial M} + \int_{\partial M} \scalar{\II_{\partial M}  \;\nabla_{\partial M} u,\nabla_{\partial M} u} d\mu_{\partial M} - 2 \int_{\partial M} \scalar{\nabla_{\partial M} u_\nu, \nabla_{\partial M} u} d\mu_{\partial M} ~.
\end{multline}
Here $\nabla_{\partial M}$ denotes the Levi-Civita connection on $\partial M$ with its induced Riemannian metric.
\end{thm}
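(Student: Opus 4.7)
The plan is to derive the identity from the weighted Bochner--Lichnerowicz--Weitzenb\"ock formula
\[
\tfrac{1}{2} L \abs{\nabla u}^2 = \norm{\nabla^2 u}^2 + \scalar{\nabla u, \nabla L u} + \scalar{\Ric_\mu \, \nabla u, \nabla u} ~,
\]
which is a pointwise identity obtained by expanding the purely Riemannian Bochner formula and absorbing the $\nabla^2 V$ and $dV \otimes dV$ contributions coming from $L = \Delta - \scalar{\nabla V, \nabla \cdot}$ into $\Ric_\mu = \Ric_g + \nabla^2 V$. First I would integrate this identity against $\mu$ over $M$. The left-hand side collapses using the weighted integration by parts recorded in the excerpt: $\int_M L \abs{\nabla u}^2 d\mu = \int_{\partial M} (\abs{\nabla u}^2)_\nu \, d\mu$. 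The middle term on the right I would integrate by parts once more, yielding $\int_M \scalar{\nabla u, \nabla L u} d\mu = \int_{\partial M} u_\nu L u \, d\mu - \int_M (Lu)^2 d\mu$. Rearranging leaves
\[
\int_M (Lu)^2 d\mu = \int_M \norm{\nabla^2 u}^2 d\mu + \int_M \scalar{\Ric_\mu \nabla u, \nabla u} d\mu + \int_{\partial M}\bigl[u_\nu L u - \tfrac{1}{2}(\abs{\nabla u}^2)_\nu\bigr] d\mu ~,
\]
so the task reduces to identifying the bracketed boundary integrand with $H_\mu (u_\nu)^2 + \scalar{\II \, \nabla_{\partial M} u, \nabla_{\partial M} u} - 2 \scalar{\nabla_{\partial M} u_\nu, \nabla_{\partial M} u}$ after one final integration by parts on $\partial M$.

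For this boundary computation I would use two standard pointwise decompositions along $\partial M$, working in a local orthonormal frame $e_1,\dots,e_{n-1},\nu$. The first, from the Gauss equation, is
\[
L u = L_{\partial M}(u|_{\partial M}) + H_\mu \, u_\nu + \nabla^2 u(\nu,\nu) ~,
\]
obtained by splitting $\Delta u = \sum_i \nabla^2 u(e_i,e_i) + \nabla^2 u(\nu,\nu)$, using $\nabla_{e_i} e_i = \nabla^{\partial M}_{e_i} e_i - \II(e_i,e_i)\nu$ to produce $\Delta_{\partial M}(u|_{\partial M}) + H_g u_\nu$, and then absorbing the normal derivative of $V$ into $H_\mu$. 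The second, from the definition of $\II$, is
\[
\nabla^2 u(X,\nu) = X(u_\nu) - \II(X, \nabla_{\partial M} u) \qquad \forall\, X \in T\partial M ~,
\]
which follows from $\scalar{\nabla_X \nabla u,\nu} = X(u_\nu) - \scalar{\nabla u, \nabla_X \nu}$. Contracting this against $\nabla u = \nabla_{\partial M} u + u_\nu \nu$ gives
\[
\tfrac{1}{2}(\abs{\nabla u}^2)_\nu = \nabla^2 u(\nu,\nabla u) = \scalar{\nabla_{\partial M} u_\nu,\nabla_{\partial M} u} - \II(\nabla_{\partial M} u, \nabla_{\partial M} u) + u_\nu \nabla^2 u(\nu,\nu) ~.
\]

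Subtracting these two expressions, the $u_\nu \nabla^2 u(\nu,\nu)$ terms cancel and I obtain
\[
u_\nu L u - \tfrac{1}{2}(\abs{\nabla u}^2)_\nu = u_\nu L_{\partial M}(u|_{\partial M}) + H_\mu u_\nu^2 + \II(\nabla_{\partial M} u,\nabla_{\partial M} u) - \scalar{\nabla_{\partial M} u_\nu,\nabla_{\partial M} u} ~.
\]
Integrating over the closed manifold $\partial M$ with respect to $\mu|_{\partial M} = e^{-V}d\vol_{\partial M}$ and applying the integration by parts formula on $\partial M$ (whose boundary is empty) to the first summand gives $\int_{\partial M} u_\nu L_{\partial M}(u|_{\partial M}) d\mu = -\int_{\partial M} \scalar{\nabla_{\partial M} u_\nu, \nabla_{\partial M} u} d\mu$, which combines with the remaining $-\scalar{\nabla_{\partial M} u_\nu, \nabla_{\partial M} u}$ to produce the claimed factor $-2$, completing the identification.

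The conceptual work is essentially done by the weighted Bochner identity, so the main obstacle is purely bookkeeping: keeping consistent sign conventions for $\II$ in the Gauss formula and ensuring that the regularity hypothesis $u \in \S_N(M)$ is enough to justify the integrations by parts on both $M$ and $\partial M$. The class $\S_N(M)$ is chosen precisely so that $\nabla_{\partial M} u_\nu$ exists as a continuous tangential vector field along $\partial M$ and $\nabla^2 u$ extends continuously up to the boundary, which is exactly what the two boundary decompositions and the integration by parts on $\partial M$ require.
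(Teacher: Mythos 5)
Your proposal is correct and follows essentially the same route as the paper's proof: integrate the weighted Bochner--Lichnerowicz--Weitzenb\"ock formula over $M$, integrate by parts to isolate the boundary integrand $u_\nu Lu - \tfrac{1}{2}(\abs{\nabla u}^2)_\nu$, decompose it via the Gauss formula and the definition of $\II$, and perform one last integration by parts on the closed manifold $\partial M$ to produce the factor $-2$; your invariant formulation is just a coordinate-free rewriting of the paper's orthonormal-frame computation. The one inaccuracy is your closing claim that $u \in \S_N(M)$ forces $\nabla^2 u$ to extend continuously up to $\partial M$: the class $\S_N(M)$ only demands $C^2$ regularity in the interior, $C^1$ regularity on all of $M$, and $u_\nu \in C^1(\partial M)$, so your pointwise boundary decompositions are not directly licensed for such $u$; the paper handles this by first proving the identity for $u \in C^3_{loc}(int(M)) \cap C^2(M)$ and then passing to $\S_N(M)$ by a partition-of-unity and mollification approximation, and your argument needs the same final step.
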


This natural generalization of the (integrated) Bochner--Lichnerowicz--Weitzenb\"{o}ck formula for manifolds with boundary was first obtained by R.C.~Reilly \cite{ReillyOriginalFormula} in the classical Riemannian setting ($\mu=\vol_M$). The version above is a modification (obtained by integrating by parts on $\partial M$) of a previous version due to L.~Ma and S.-H.~Du \cite{MaDuGeneralizedReilly}. For completeness, we sketch in Section \ref{sec:prelim} the proof of the version (\ref{Reilly}) which we require for deriving our results. 

\medskip

It is known that by dualizing the Bochner--Lichnerowicz--Weitzenb\"{o}ck formula, various Poincar\'e-type inequalities such as the Lichnerowicz \cite{LichnerowiczBook}, Brascamp--Lieb \cite{BrascampLiebPLandLambda1,LedouxSpinSystemsRevisited} and Veysseire \cite{VeysseireSpectralGapEstimateCRAS} inequalities, may be obtained under appropriate bounds on curvature and dimension. Recently, 
heavy-tailed versions of the Brascamp--Lieb inequalities have been obtained in the Euclidean setting by Bobkov--Ledoux \cite{BobkovLedouxWeightedPoincareForHeavyTails} and
sharpened by Nguyen \cite{NguyenDimensionalBrascampLieb}. By employing the generalized Reilly formula, we unify, extend and generalize
many
of these previously known results to various new combinations of boundary conditions on the domain (locally convex, mean-convex) and the function (Neumann, Dirichlet) in the weighted-Riemannian setting. We mention in passing another celebrated application of the latter duality argument in the Complex setting, namely H\"{o}rmander's $L^2$ estimate \cite{Hormander1965L2EstimatesAndDBarProblem}, but we refrain from attempting to generalize it here; further more recent applications may be found in \cite{Helffer-DecayOfCorrelationsViaWittenLaplacian,LedouxSpinSystemsRevisited, KlartagUnconditionalVariance,BartheCorderoVariance,KlartagMomentMap}.

Given a finite measure $\nu$ on a measurable space $\Omega$, and a $\nu$-integrable function $f$ on $\Omega$, we denote:
\[
\dashint_\Omega f d\nu := \frac{1}{\nu(\Omega)} \int_\Omega f d\nu ~,~ \Var_\nu(f) := \int_{\Omega} \brac{f - \dashint_\Omega f d\nu}^2 d\nu ~. 
\]
The following theorem, obtained in Section \ref{sec:BLN}, is the main result of this work: 
\begin{thm}[Generalized Dimensional Brascamp--Lieb With Boundary] \label{thm:intro-BLN}
Assume that $\Ric_{\mu,N} > 0$ on $M$ with $1/N \in (-\infty,1/n]$. Then for any $f \in C^{1}(M)$:
\begin{enumerate}
\item (Neumann Dimensional Brascamp--Lieb inequality on locally convex domain)

Assume that $\II_{\partial M}\geq 0$ ($M$ is locally convex). Then:
 \[
 \frac{N}{N-1} \Var_\mu(f) \leq \int_M \scalar{\Ric_{\mu,N}^{-1} \nabla f, \nabla f} d\mu ~.
 \]

\item (Dirichlet Dimensional Brascamp--Lieb inequality on generalized mean-convex domain)

Assume that $H_\mu \geq 0$ ($M$ is generalized mean-convex), $f \equiv 0$  on $\partial M \neq \emptyset$.
Then:
 \[
  \frac{N}{N-1} \int_M f^2 d\mu\leq \int_M \scalar{\Ric_{\mu,N}^{-1} \nabla f, \nabla f} d\mu ~.
 \]

\item (Neumann Dimensional Brascamp--Lieb inequality on strictly generalized mean-convex domain)
 
Assume that $H_\mu > 0$ ($M$ is strictly generalized mean-convex). Then for any $C \in \Real$:
 \[
 \frac{N}{N-1} \Var_\mu(f) \leq \int_M \scalar{\Ric_{\mu,N}^{-1} \nabla f, \nabla f} d\mu + 
\int_{\partial M} \frac{1}{H_\mu} \Bigl(f - C\Bigr)^2 d\mu_{\partial M} ~.
  \]
  In other words:
\[
 \frac{N}{N-1} \Var_\mu(f) \leq \int_M \scalar{\Ric_{\mu,N}^{-1} \nabla f, \nabla f} d\mu + \Var_{\mu_{\partial M} / H_\mu}(f|_{\partial M})  ~.
\]   
  \end{enumerate}
\end{thm}

Restricting to Euclidean space $(\Real^n,\abs{\cdot})$ and setting $N=\infty$ in Case (1), the tensor $\Ric_{\mu,\infty}$ boils down to the (Euclidean) Hessian $\nabla^2 V$, and we recover 
the celebrated Poincar\'e-type inequality obtained by H. J. Brascamp and E. H. Lieb \cite{BrascampLiebPLandLambda1} as an infinitesimal version of the Prekop\'a--Leindler inequality. 
When $\Ric_{\mu,N} \geq \rho g$ with $\rho > 0$ (i.e. $(M,g,\mu)$ satisfies the $\CD(\rho,N)$ condition), by replacing the $\int_M \sscalar{\Ric_{\mu,N}^{-1} \; \nabla f,\nabla f } d\mu$ term with the \emph{looser} $\frac{1}{\rho} \int_M \abs{\nabla f}^2 d\mu$ in all occurrences above, we obtain 
various generalizations of the classical Lichnerowicz estimate \cite{LichnerowiczBook} on the spectral-gap of the weighted-Laplacian $-L$ under different boundary conditions; in particular, in the non-weighted classical case $N=n$, this recovers the spectral-gap estimate of Escobar \cite{EscobarLichnerowiczWithConvexBoundary} and Xia \cite{XiaLichnerowiczWithConvexBoundary} under Neumann boundary conditions, and the one by Reilly \cite{ReillyOriginalFormula} under Dirichlet conditions. When $N \leq -1$, Case (1) was obtained in the \emph{Euclidean setting} (and under the stronger assumption that $\Ric_{\mu,\infty} = \nabla^2 V > 0$) with a 
constant better
than $\frac{N}{N-1}$ on the left-hand-side above by V. H. Nguyen \cite{NguyenDimensionalBrascampLieb}, improving a previous estimate of S. Bobkov and M. Ledoux \cite{BobkovLedouxWeightedPoincareForHeavyTails} valid when $N \leq 0$. 
However, on a general \emph{weighted Riemannian manifold}, our constant $\frac{N}{N-1}$ is \emph{best possible} in Case (1) for the entire range $N \in (-\infty,-1] \cup [n,\infty]$, see Subsection \ref{subsec:sharp-constant}.

We refer to Subsection \ref{subsec:prev-known} for a longer exposition on the previously known generalizations in these directions; with few exceptions, Cases (2) and (3) and also Case (1) when $N \neq \infty$ seem new. We note that while the heat semi-group approach of Bakry--\'Emery is a very powerful tool in Case (1), namely under Neumann convex boundary conditions, we are not aware of an analogous semi-group approach under the Case (2) Dirichlet mean-convex boundary conditions, let alone the mixed boundary conditions of Case (3), and thus confine our analysis to the $L^2$-duality approach. 

\medskip

To conclude this work, we extend in Section \ref{sec:Vey} a result of L. Veysseire \cite{VeysseireSpectralGapEstimateCRAS}, who obtained a spectral-gap estimate of $1 / \dashint_M (1/\rho) d\mu$ assuming that $\Ric_\mu \geq \rho g$ for a function $\rho : M \rightarrow \Real_+$ which is not necessarily bounded away from zero, 
to the case of Neumann boundary conditions when $M$ is locally convex. 
\begin{rem} \label{rem:non-compact}
Although all of our results are formulated for compact weighted-manifolds with boundary, the results easily extend to the non-compact case, if the manifold $M$ can be exhausted by compact submanifolds $\set{M_k}$ so that each $(M_k,g|_{M_k},\mu|_{M_k})$ has an appropriate boundary (locally-convex or generalized mean-convex, in accordance with the desired result). In the Dirichlet case, the asserted inequalities then extend to all functions in $C^1_0(M)$ having compact support and vanishing on the boundary $\partial M$. In the Neumann cases, the asserted inequalities extend to all functions $f \in C^1_{loc}(M) \cap L^2(M,\mu)$ when $\mu$ is a finite measure. When such an exhaustion is not available but the manifold is complete, one may alternatively apply a functional-analytic argument to obtain analogous results on non-compact manifolds - more details may be found in \cite[Appendix]{KolesnikovEMilman-RiemannianMetrics}.
\end{rem}

\medskip \noindent
\textbf{Acknowledgements.} We thank Franck Barthe, Bo Berndtsson, Andrea Colesanti, Dario Cordero-Erausquin, Bo'az Klartag, Michel Ledoux, Frank Morgan, Van Hoang Nguyen, Shin-ichi Ohta, Yehuda Pinchover and Steve Zelditch for their comments and interest. We also thank the anonymous referees for carefully reading the paper.

\section{Generalized Reilly Formula and Other Preliminaries} \label{sec:prelim}

\subsection{Notation}

We denote by $int(M)$ the interior of $M$. Given a compact differentiable manifold $\Sigma$ (which is at least $C^k$ smooth), we denote by $C^{k}(\Sigma)$ the space of real-valued functions on $\Sigma$ with continuous (and bounded) derivatives $\brac{\frac{\partial}{\partial x}}^a f$, for every multi-index $a$ of order $|a| \leq k$ in a given coordinate system. Similarly, the space $C^{k,\alpha}(\Sigma)$ denotes the subspace of functions whose $k$-th order derivatives are uniformly H\"{o}lder continuous of order $\alpha$ on the $C^{k,\alpha}$ smooth manifold $\Sigma$. When $\Sigma$ is non-compact, we may use $C_{loc}^{k,\alpha}(\Sigma)$ to denote the class of functions $u$ on $M$ so that $u|_{\Sigma_0} \in C^{k,\alpha}(\Sigma_0)$ for all compact subsets $\Sigma_0 \subset \Sigma$. These spaces are equipped with their usual corresponding topologies. 

Throughout this work we employ Einstein summation convention.
By abuse of notation, we denote different covariant and contravariant versions of a tensor in the same manner. So for instance, $Ric_\mu$ may denote the $2$-covariant tensor $(Ric_\mu)_{\alpha,\beta}$, but also may denote its $1$-covariant $1$-contravariant version $(Ric_\mu)^{\alpha}_{\beta}$, as in:
\[
 \scalar{\Ric_\mu \nabla f , \nabla f} = g_{i,j} (\Ric_{\mu})^i_k \nabla^k f \nabla^j f  = (\Ric_\mu)_{i,j} \nabla^i f \nabla^j f = \Ric_\mu(\nabla f,\nabla f) ~.
 \]
 Similarly, inverse tensors are interpreted according to the appropriate context. For instance, the $2$-contravariant tensor $(\text{II}^{-1})^{\alpha,\beta}$ is defined by:
 \[
 (\text{II}^{-1})^{i,j} \text{II}_{j,k} = \delta^i_k ~.
 \]
We freely raise and lower indices by contracting with the metric. 
Since we only deal with $2$-tensors, the only possible contraction is often denoted by using the trace notation $tr$.

Finally, when studying consequences of the $\CD(\rho,N)$ condition, the various expressions in which $N$ appears are interpreted in the limiting sense when $1/N= 0$. For instance, $N/(N-1)$ is interpreted as $1$, and $N f^{1/N}$ is interpreted as $\log f$ (since $\lim_{1/N \rightarrow 0} N (x^{1/N} -1) = \log(x)$; the constant $-1$ in the latter limit does not influence our application of this convention). 
 
 \subsection{Proof of the Generalized Reilly Formula}

For completeness, we sketch the proof of our main tool, Theorem \ref{thm:Reilly} from the Introduction, following the proof given in \cite{MaDuGeneralizedReilly}. 

\begin{proof}[Proof of Theorem \ref{thm:Reilly}]
The generalized Bochner--Lichnerowicz--Weitzenb\"{o}ck formula \cite{Lichnerowicz1970GenRicciTensorCRAS,BakryEmery} 
states that for any $u \in C^3_{loc}(int(M))$, we have:
\begin{equation} \label{eq:Bochner}
\frac{1}{2} L \abs{\nabla u}^2 = \norm{\nabla^2 u}^2 + \scalar{\nabla L u, \nabla u} + \scalar{\Ric_\mu \; \nabla u , \nabla u} ~.
\end{equation}
We introduce an orthonormal frame of vector fields $e_1,\ldots,e_n$ so that $e_n = \nu$ on $\partial M$, and denote $u_i = du(e_i)$, $u_{i,j} = \nabla^2 u(e_i,e_j)$. 
Assuming in addition that $u \in C^2(M)$, we may integrate by parts:
\[
\int_M \frac{1}{2} L \abs{\nabla u}^2 d\mu = \int_{\partial M} \sum_{i=1}^n u_i u_{i,n} d\mu_{\partial M} ~,~ \int_M \scalar{\nabla L u, \nabla u} d\mu = \int_{\partial M} u_n (L u) d\mu_{\partial M} - \int_{M} (L u)^2 d\mu ~. 
\]
Consequently, integrating (\ref{eq:Bochner}) over $M$, we obtain:
\[
\int_M \brac{(L u)^2 - \norm{\nabla^2 u}^2 - \scalar{\Ric_\mu \nabla u , \nabla u}} d\mu = \int_{\partial M} \brac{u_n (Lu) - \sum_{i=1}^n u_i u_{i,n}} d\mu_{\partial M} ~.
\]
Now:
\[
u_n (Lu) - \sum_{i=1}^n u_i u_{i,n}  = \sum_{i=1}^{n-1} \brac{u_n u_{i,i} - u_i u_{i,n}} - u_n \scalar{\nabla u,\nabla V} ~. 
\]
Computing the different terms: 
\begin{eqnarray*}
\sum_{i=1}^{n-1} u_{i,i} &=& \sum_{i=1}^{n-1} \brac{e_i (e_i u) - (\nabla_{e_i} e_i) u} = \sum_{i=1}^{n-1} \brac{e_i (e_i u) - ((\nabla_{\partial M})_{e_i} e_i) u} + \brac{\sum_{i=1}^{n-1} (\nabla_{\partial M})_{e_i} e_i - \nabla_{e_i} e_i } u \\
&=& \Delta_{\partial M} u + \brac{\sum_{i=1}^{n-1} \II_{i,i}} e_n u =  \Delta_{\partial M} u + tr(\II) u_n ~;
\end{eqnarray*}
\[
\sum_{i=1}^{n-1} u_i u_{i,n} = \sum_{i=1}^{n-1} u_i \brac{e_i (e_n u) - (\nabla_{e_i} e_n) u} = \scalar{\nabla_{\partial M} u,\nabla_{\partial M} u_n} - \scalar{\II \;\nabla_{\partial M} u,\nabla_{\partial M} u} ~.
\]
Putting everything together:
\begin{eqnarray*}
& & \int_M \brac{(L u)^2 - \norm{\nabla^2 u}^2 - \scalar{\Ric_\mu \nabla u , \nabla u}} d\mu = \int_{\partial M} \brac{u_n (\Delta_{\partial M} u - \scalar{\nabla u,\nabla V}) + \text{tr(II)} (u_n)^2} d\mu_{\partial M} \\
&-& \int_{\partial M} \scalar{\nabla_{\partial M} u,\nabla_{\partial M} u_n} d\mu_{\partial M} + \int_{\partial M} \scalar{\II \;\nabla_{\partial M} u,\nabla_{\partial M} u} d\mu_{\partial M} ~.
\end{eqnarray*}
This is the formula obtained in \cite{MaDuGeneralizedReilly} for smooth functions. To conclude the proof, simply note that:
\[
\scalar{\nabla u,\nabla V} = \scalar{\nabla_{\partial M} u, \nabla_{\partial M} V} + u_n V_n ~,~ L_{\partial M} = \Delta_{\partial M} - \scalar{\nabla_{\partial M} V,\nabla_{\partial M}} ~,~ H_\mu = tr(\II) - V_n ~,
\]
and thus:
\[
\int_{\partial M} \brac{u_n (\Delta_{\partial M} u - \scalar{\nabla u,\nabla V}) + tr(\II) (u_n)^2} d\mu_{\partial M} = \int_{\partial M} \brac{u_n L_{\partial M} u + H_\mu u_n^2} d\mu_{\partial M} ~.
\]
Integrating by parts one last time, this time on $\partial M$, we obtain:
\[
\int_{\partial M} u_n L_{\partial M} u \; d\mu_{\partial M} = - \int_{\partial M} \scalar{\nabla_{\partial M} u_n , \nabla_{\partial M} u} d\mu_{\partial M} ~.
\]
Finally, plugging everything back, we obtain the asserted formula for $u$ as above:
\begin{eqnarray*}
& & \int_M \brac{(L u)^2 - \norm{\nabla^2 u}^2 - \scalar{\Ric_\mu \; \nabla u , \nabla u}} d\mu \\
& = &
\int_{\partial M} H_\mu u_n^2 d\mu_{\partial M} - 2 \int_{\partial M} \scalar{\nabla_{\partial M} u_n , \nabla_{\partial M} u} d\mu_{\partial M}  + \int_{\partial M} \scalar{\II \;\nabla_{\partial M} u,\nabla_{\partial M} u} d\mu_{\partial M} ~.
\end{eqnarray*}
To conclude that the assertion in fact holds for $u \in \S_N(M)$, we employ a standard approximation argument using a partition of unity and mollification. Since the metric is assumed at least $C^3$ and $\partial M$ is $C^2$, we may approximate any $u \in \S_N(M)$ by functions $u_k \in C^3_{loc}(int(M)) \cap C^2(M)$, so that $u_k \rightarrow u$ in $C^2_{loc}(int(M))$ and $C^1(M)$, and $(u_k)_\nu \rightarrow u_\nu$ in $C^1(\partial M)$. The assertion then follows by passing to the limit.  
\end{proof}

\begin{rem}
For minor technical reasons, it will be useful to record the following variants of the generalized Reilly formula, which are obtained by analogous approximation arguments to the one given above:
\begin{itemize}
\item
If $u_\nu$ or $u$ are constant on $\partial M$ and $u \in \S_0(M)$ (recall $\S_0(M) := C^2_{loc}(int(M)) \cap C^1(M)$), then:
\begin{multline}
\label{Reilly3}
\int_M (L u)^2 d\mu = \int_M \norm{\nabla^2 u}^2 d\mu + \int_M \scalar{ \Ric_\mu \; \nabla u, \nabla u} d\mu + \\
\int_{\partial M} H_\mu (u_\nu)^2 d\mu_{\partial M} + \int_{\partial M} \scalar{\II_{\partial M}  \;\nabla_{\partial M} u,\nabla_{\partial M} u} d\mu_{\partial M} ~.
\end{multline}
\item
If $u \in \S_D(M) := \S_0(M) \cap C^2(\partial M)$, then integration by parts yields:
\begin{multline}
\label{Reilly2}
\int_M (L u)^2 d\mu = \int_M \norm{\nabla^2 u}^2 d\mu + \int_M \scalar{ \Ric_\mu \; \nabla u, \nabla u} d\mu + \\
\int_{\partial M} H_\mu (u_\nu)^2 d\mu_{\partial M} + \int_{\partial M} \scalar{\II_{\partial M}  \;\nabla_{\partial M} u,\nabla_{\partial M} u} d\mu_{\partial M} + 2 \int_{\partial M} u_\nu L_{\partial M} u \; d\mu_{\partial M} ~.
\end{multline}
\end{itemize}
\end{rem}

\begin{rem}
Throughout this work, when integrating by parts, we employ a slightly more general version of the textbook Stokes Theorem $\int_M d\omega = \int_{\partial M} \omega$,  in which one only assumes that $\omega$ is a continuous differential $(n-1)$-form on $M$ which is differentiable on $int(M)$ (and so that $d\omega$ is integrable there); a justification may be found in \cite{MacdonaldGeneralizedStokes}. This permits us to work with the classes $C^k_{loc}(int(M))$ occurring throughout this work. 
\end{rem}

\subsection{The $\CD(\rho,N)$ condition for $1/N \in [-\infty,1/n]$}

The results in this subsection for $1/N \in [0,1/n]$ are due to Bakry (e.g. \cite[Section 6]{BakryStFlour}). 

\begin{lem} \label{lem:CS}
For any $u \in C^2_{loc}(M)$ and $1/N \in [-\infty,1/n]$:
\begin{equation} \label{eq:Bakry-CS}
\Gamma_2(u) = \scalar{ \Ric_\mu \; \nabla u, \nabla u} + \norm{\nabla^2 u}^2 \geq \scalar{\Ric_{\mu,N}\; \nabla u, \nabla u} + \frac{1}{N} (Lu)^2 ~.
\end{equation}
Our convention throughout this work is that $-\infty \cdot 0 = 0$, and so if $Lu = 0$ at a point $p \in M$, the assertion when $\frac{1}{N} = -\infty$  is that:
\[
\Gamma_2(u) \geq \scalar{\Ric_{\mu,0}\; \nabla u, \nabla u} ~,
\]
at that point. 
\end{lem}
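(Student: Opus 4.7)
The claim is pointwise, so it suffices to work at a single point $p \in M$. First, I would cancel $\scalar{\Ric_\mu \nabla u, \nabla u}$ from both sides of \eqref{eq:Bakry-CS}. Using the definitional identity $\Ric_\mu - \Ric_{\mu,N} = \frac{1}{N-n}\, dV \otimes dV$, the inequality reduces to
\[
\norm{\nabla^2 u}^2 + \frac{1}{N-n}\scalar{\nabla V, \nabla u}^2 \;\geq\; \frac{1}{N}(Lu)^2.
\]

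Next I would apply the trace Cauchy--Schwarz inequality $\norm{\nabla^2 u}^2 \geq (\Delta u)^2 / n$, which follows from $\Delta u = tr(\nabla^2 u)$ and the bound $(tr A)^2 \leq n \norm{A}^2$ for symmetric $2$-tensors $A$. Setting $a := \Delta u$ and $b := \scalar{\nabla V, \nabla u}$ so that $Lu = a - b$, it then suffices to establish the scalar inequality
\[
\frac{a^2}{n} + \frac{b^2}{N-n} \;\geq\; \frac{(a-b)^2}{N}.
\]
The key computation is a perfect-square identity: after multiplying both sides by $nN(N-n)$, the difference of the two sides equals exactly $(a(N-n) + bn)^2$, so the inequality holds provided the common multiplier $nN(N-n)$ is nonnegative.

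I would verify the nonnegativity of $nN(N-n)$ case by case. For $N \in (n, \infty)$ all three factors are positive; for $N \in (-\infty, 0)$, $n > 0$ while both $N$ and $N-n$ are negative, so $N(N-n) > 0$. The degenerate values in $1/N \in [-\infty, 1/n]$ must be treated separately: $1/N = 0$ reduces the inequality to $\norm{\nabla^2 u}^2 \geq 0$; $1/N = 1/n$ forces $V$ constant (hence $b = 0$ and $L = \Delta$), and the inequality collapses to the trace Cauchy--Schwarz already used; finally $1/N = -\infty$ is interpreted via the convention $-\infty \cdot 0 = 0$, applied only when $Lu = 0$, in which case $a = b$ and the desired $\norm{\nabla^2 u}^2 \geq b^2/n$ follows from $\norm{\nabla^2 u}^2 \geq a^2/n$.

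The main obstacle I anticipate is the sign bookkeeping when $N \leq 0$: the coefficients $1/(N-n)$ and $1/N$ both flip sign relative to the classical $N \geq n$ regime, so one cannot naively rearrange the inequality while preserving its direction. The virtue of the perfect-square approach is that it handles both regimes uniformly, because the combined multiplier $nN(N-n)$ happens to have the right sign in each; without the explicit algebraic identity, this uniform treatment across the ``forbidden gap'' $N \in (0,n)$ would look coincidental.
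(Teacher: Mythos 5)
Your proof is correct and follows essentially the same route as the paper: both reduce to the scalar inequality $\frac{a^2}{n}+\frac{b^2}{N-n}\geq\frac{(a-b)^2}{N}$ via the trace Cauchy--Schwarz bound $\norm{\nabla^2 u}^2\geq(\Delta u)^2/n$, and both dispose of the degenerate values $1/N\in\set{0,1/n,-\infty}$ by separate elementary arguments. The only difference is that the paper concludes by citing the two-parameter Cauchy--Schwarz inequality $\frac{1}{\alpha}A^2+\frac{1}{\beta}B^2\geq\frac{1}{\alpha+\beta}(A+B)^2$ (valid when $\alpha,\beta>0$ or when $\alpha+\beta<0$ and $\alpha\beta<0$, which with $\alpha=n$, $\beta=N-n$ is exactly your sign condition $nN(N-n)>0$), whereas you make its underlying perfect-square identity explicit --- a purely presentational difference.
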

\begin{proof}
Recalling the definitions, this is equivalent to showing that:
\[
\norm{\nabla^2 u}^2  + \frac{1}{N-n} \scalar{\nabla u,\nabla V}^2 \geq \frac{1}{N} (Lu)^2 ~.
\]
Clearly the case that $1/N = 0$ ($N = \infty$) follows. But by Cauchy--Schwarz:
\[
\norm{\nabla^2 u}^2  \geq \frac{1}{n} (\Delta u)^2 ~,
\]
and so the case $N=n$, which corresponds to a constant function $V$ so that $\Ric_\mu = \Ric_{\mu,n} = \Ric_g$ and $L = \Delta$, also follows. It remains to show that:
\[
\frac{1}{n} (\Delta u)^2 + \frac{1}{N-n} \scalar{\nabla u,\nabla V}^2 \geq \frac{1}{N} (Lu)^2 ~.
\]
The case $1/N = -\infty$ ($N=0$) follows since when $0 = L u = \Delta u - \scalar{\nabla u,\nabla V}$ then:
\[
\frac{1}{n} (\Delta u)^2 -  \frac{1}{n} \scalar{\nabla u,\nabla V}^2  = \frac{1}{n} (\Delta u + \scalar{\nabla u,\nabla V}) (\Delta u - \scalar{\nabla u,\nabla V}) = 0 ~.
\]
In all other cases, the assertion follows from another application of Cauchy--Schwarz:
\[
\frac{1}{\alpha} A^2 + \frac{1}{\beta} B^2 \geq \frac{1}{\alpha + \beta} (A+B)^2 \;\;\; \forall A,B \in \Real ~,
\]
valid as soon as $(\alpha,\beta)$ lay in either the set $\set{\alpha, \beta > 0}$ or the set $\set{ \alpha + \beta < 0 \text{ and }\alpha \beta < 0}$.
\end{proof}

\begin{rem}
It is immediate to deduce from Lemma \ref{lem:CS} that for $1/N \in (-\infty,1/n]$, $\Ric_{\mu,N} \geq \rho g$ on $M$, $\rho \in \Real$, if and only if:
\[
\Gamma_2(u) \geq \rho \abs{\nabla u}^2 + \frac{1}{N} (Lu)^2 ~,~ \forall u \in C^2_{loc}(M) ~.
\]
Indeed, the necessity follows from Lemma \ref{lem:CS}. The sufficiency follows by locally constructing given $p \in M$ and $X \in T_p M$ a function $u$ so that $\nabla u = X$ at $p$ and equality holds in both applications of the Cauchy--Schwarz inequality in the proof above, as this implies that $\Ric_{\mu,N}(X,X) \geq \rho \abs{X}^2$. Indeed, equality in the first application implies that $\nabla^2 u$ is a multiple of $g$ at $p$, whereas the equality in the second implies when $1/N \notin \set{0,1/n}$ that $\scalar{\nabla u,\nabla V}$ and $\Delta u$ are appropriately proportional at $p$; clearly all three requirements can be simultaneously met. The cases $1/N \in \set{0,1/n}$ follow by approximation. \end{rem}

\subsection{Solution to Poisson Equation on Weighted Riemannian Manifolds}

As our manifold is smooth, connected, compact, with $C^2$ smooth boundary and strictly positive $C^2$-density all the way up to the boundary, all of the classical 
elliptic existence, uniqueness and regularity results (e.g. \cite[Chapter 8]{GilbargTrudinger}, \cite[Chapter 5]{LiebermanObliqueBook}, \cite[Chapter 3]{LadyEllipticBook}) 
immediately extend from the Euclidean setting to our weighted-manifold one (see e.g. \cite{Taylor-PDEBook-I,MorreyBook}); for more general situations (weaker regularity of metric, Lipschitz domains, etc.) see e.g. \cite{MitreaTaylor-PDEonLipManifolds} and the references therein. 
We summarize the results we require in the following:

\begin{thm}
Given a weighted-manifold $(M,g,\mu)$ , $\mu = \exp(-V) d\vol_M$, we assume that $\partial M$ is $C^2$ smooth. Let $\alpha \in (0,1)$, and assume that $g$ is $C^{2,\alpha}$ smooth and $V \in C^{1,\alpha}(M)$. 
Let $f \in C^{0,\alpha}(M)$, $\varphi_D \in C^{2}(\partial M)$ and $\varphi_N \in C^{1}(\partial M)$. Then there exists a function $u \in C^{2,\alpha}_{loc}(int(M)) \cap C^{1,\beta}(M)$ for all $\beta \in (0,1)$, which solves: \[
L u = f ~ \text{on $M$} ~,
\]
with either of the following boundary conditions on $\partial M$:
\begin{enumerate}
\item Dirichlet: $u|_{\partial M} = \varphi_D$, assuming $\partial M \neq \emptyset$.  
\item Neumann: $u_\nu|_{\partial M} = \varphi_N$, assuming the following compatibility condition is satisfied:
\[
 \int_{M} f d\mu = \int_{\partial M} \varphi_N d\mu_{\partial M} ~.
\]
\end{enumerate}
In particular, $u \in \S_0(M)$ in either case. Moreover, $u \in \S_N(M)$ in the Neumann case and $u \in \S_D(M)$ in the Dirichlet case. 

\end{thm}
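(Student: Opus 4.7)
The plan is to view $L = e^{V}\,\div(e^{-V}\nabla\cdot)$ in each local chart as a second-order uniformly elliptic operator with principal coefficients coming from $g^{ij}\in C^{2,\alpha}$ and lower-order coefficients $-g^{ij}\partial_j V \in C^{1,\alpha}$. This places the problem inside the classical Schauder framework (cf.\ Gilbarg--Trudinger). The proof should then consist of patching local interior and boundary results via a finite partition of unity subordinate to $C^{2}$ coordinate charts, some of which straighten $\partial M$ to a piece of a hyperplane.

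For existence I would exploit the $L^{2}(M,\mu)$-self-adjointness of $L$ and work variationally with the symmetric bilinear form
\[
a(u,v) := \int_{M}\sscalar{\nabla u,\nabla v}\,d\mu
\]
on $H^{1}(M,\mu)$. For the Dirichlet problem, first reduce to the homogeneous case by subtracting a $C^{2,\alpha}$-extension of $\varphi_{D}$ into $M$ obtained from a tubular neighbourhood of $\partial M$; then $a$ is coercive on $H^{1}_{0}(M,\mu)$ by Poincar\'e, and Lax--Milgram supplies a unique weak solution. For the Neumann problem, since $M$ is connected, the kernel of the associated closed quadratic form is precisely the constants, and its range is characterised by $L^{2}(\mu)$-orthogonality to $1$, which is exactly the asserted compatibility condition $\int_M f\,d\mu = \int_{\partial M}\varphi_N\,d\mu$; the Fredholm alternative then gives a weak solution, unique up to an additive constant.

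The interior regularity $u\in C^{2,\alpha}_{loc}(int(M))$ is immediate from the classical interior Schauder estimate applied in each interior chart. Up to $\partial M$ I would straighten the boundary using a $C^{2}$ chart; under this $C^{2}$ diffeomorphism the pushed-forward metric has only $C^{1}$ principal coefficients, so the transformed half-space problem has $C^{1}$ leading coefficients, $C^{0,\alpha}$ lower-order terms, $C^{0,\alpha}$ right-hand side, and boundary data of class $C^{2,\alpha}$ (Dirichlet) or $C^{1,\alpha}$ (Neumann). The boundary Schauder theory in this reduced-regularity regime delivers $u\in C^{1,\beta}$ up to the flat boundary for every $\beta\in(0,1)$; pulling back yields $u\in C^{1,\beta}(M)$ for all $\beta<1$. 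The membership $u\in\S_{0}(M)$ now follows directly; in the Neumann case $u_{\nu}|_{\partial M}=\varphi_{N}\in C^{1,\alpha}(\partial M)\subset C^{1}(\partial M)$, giving $u\in\S_{N}(M)$; in the Dirichlet case $u|_{\partial M}=\varphi_{D}\in C^{2,\alpha}(\partial M)\subset C^{2}(\partial M)$, giving $u\in\S_{D}(M)$.

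The main obstacle is the irreducible regularity loss at the boundary caused by $\partial M$ being only $C^{2}$: any boundary-flattening chart drops the leading metric coefficients from $C^{1,\alpha}$ to $C^{1}$, so one genuinely cannot hope for $C^{2,\alpha}(M)$, only $C^{1,\beta}(M)$ for $\beta<1$. The bookkeeping required is therefore to check that this restricted global regularity, combined with the better regularity of the \emph{boundary data} and with interior $C^{2,\alpha}_{loc}$ regularity, is enough to place $u$ in $\S_{0}$, $\S_{N}$, or $\S_{D}$ as appropriate, which it is because $C^{1,\beta}(M)$ already gives continuous traces of $\nabla u$ on $\partial M$ and the one-sided normal derivative $u_{\nu}|_{\partial M}$ then inherits the $C^{1,\alpha}$-regularity of $\varphi_{N}$ from the Neumann condition itself.
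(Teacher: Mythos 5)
Your proposal is correct and coincides with the paper's treatment: the paper offers no proof of its own here, simply invoking classical elliptic existence and Schauder regularity theory (citing Taylor and Morrey), and your variational existence argument plus interior/boundary Schauder estimates is exactly the standard argument those references contain. You also correctly identify the one genuinely delicate point, namely that the merely $C^2$ boundary caps the global regularity at $C^{1,\beta}(M)$, which is precisely what the paper's own remark about upgrading to $C^{2,\alpha}(M)$ under a $C^{2,\alpha}$ boundary acknowledges.
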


\begin{rem}
For future reference, we remark that it is enough to only assume in the proof of the generalized Reilly formula (including the final approximation argument) that the metric $g$ is $C^3$ smooth, so in particular the above regularity results apply. 
\end{rem}

We will not require the uniqueness of $u$ above, but for completeness we mention that this is indeed the case for Dirichlet boundary conditions, and up to an additive constant in the Neumann case. 

\subsection{Spectral-gap on Weighted Riemannian Manifolds}

Let $\lambda_1^N$ denote the best constant in the Neumann Poincar\'e inequality:
\[
\lambda_1^N \Var_\mu(f) \leq \int_M \abs{\nabla f}^2 d\mu ~,~ \forall f \in H^1(\mu) ~,
\]
and let $\lambda_1^D$ denote the best constant in the Dirichlet Poincar\'e inequality:
\[
\lambda_1^D \int_M f^2 d\mu \leq \int_M \abs{\nabla f}^2 d\mu ~,~ \forall f \in H^1_0(\mu)   ~.
\]
Here $H^1(\mu)$ and $H^1_0(\mu)$ denote the Sobolev spaces obtained by completing $C^\infty(M)$ and $C^\infty_0(M)$ in the $H^1(\mu)$-norm $\sqrt{\int_M f^2 d\mu + \int_M \abs{\nabla f}^2 d\mu}$.  It is well-known (e.g. \cite{Taylor-PDE-II-Book}) that the symmetric operator $-L$ on $L^2(\mu)$ with domain $C^\infty(M)$ or $C^\infty_0(M)$ admits a (unique) self-adjoint positive semi-definite extension, called the Neumann and Dirichlet (negative) Laplacian, respectively. Both instances have discrete non-negative spectra with corresponding complete orthonormal bases of eigenfunctions. In the first case, $\lambda_1^N$ is the first positive eigenvalue of the (negative) Neumann Laplacian:
\[
-L u = \lambda^N_1 u \text{ on $M$ } ~,~ u_\nu \equiv 0 \text{ on $\partial M$} ~;
\]
the zero eigenvalue corresponds to the eigenspace of constant functions, and so only functions $u$ orthogonal to constants are considered. 
In the second case, $\lambda_1^D$ is the first (positive) eigenvalue of the (negative) Dirichlet Laplacian:\[
-L u = \lambda^D_1 u \text{ on $M$ } ~,~ u \equiv 0 \text{ on $\partial M$} ~.
\]
Our assumptions on the smoothness of $M$, its boundary, and the density $\exp(-V)$, guarantee by elliptic regularity theory that in either case, all eigenfunctions are in $\S_0(M)$ (in fact, in $\S_N(M)$ in the Neumann case and in $\S_D(M)$ in the Dirichlet case).

\section{Generalized Brascamp--Lieb type inequalities on $M$} \label{sec:BLN}

In this section we provide a proof of Theorem \ref{thm:intro-BLN} from the Introduction, which we repeat here for convenience:

\begin{thm}[Generalized Dimensional Brascamp--Lieb With Boundary] \label{thm:BLN}
Assume that $\Ric_{\mu,N} > 0$ on $M$ with $1/N \in (-\infty,1/n]$. Then 
for any  $f \in C^{1}(M)$:
\begin{enumerate}
\item (Neumann Dimensional Brascamp--Lieb inequality on locally convex domain)

Assume that $\II_{\partial M}\geq 0$ ($M$ is locally convex). Then:
 \[
 \frac{N}{N-1} \Var_\mu(f) \leq \int_M \scalar{\Ric_{\mu,N}^{-1} \nabla f, \nabla f} d\mu ~.
 \]

\item (Dirichlet Dimensional Brascamp--Lieb inequality on generalized mean-convex domain)

Assume that $H_\mu \geq 0$ ($M$ is generalized mean-convex), $f \equiv 0$  on $\partial M \neq \emptyset$.
Then:
 \[
  \frac{N}{N-1} \int_M f^2 d\mu\leq \int_M \scalar{\Ric_{\mu,N}^{-1} \nabla f, \nabla f} d\mu ~.
 \]

\item (Neumann Dimensional Brascamp--Lieb inequality on strictly generalized mean-convex domain)
 
Assume that $H_\mu > 0$ ($M$ is strictly generalized mean-convex). Then for any $C \in \Real$:
 \[
 \frac{N}{N-1} \Var_\mu(f) \leq \int_M \scalar{\Ric_{\mu,N}^{-1} \nabla f, \nabla f} d\mu + 
\int_{\partial M} \frac{1}{H_\mu} \Bigl(f - C\Bigr)^2 d\mu_{\partial M} ~.
 \]
 In other words:
  \[
 \frac{N}{N-1} \Var_\mu(f) \leq \int_M \scalar{\Ric_{\mu,N}^{-1} \nabla f, \nabla f} d\mu + \Var_{\mu_{\partial M} / H_\mu}(f|_{\partial M})  ~.
\]   
  \end{enumerate}
\end{thm}

\subsection{Previously Known Particular Cases} \label{subsec:prev-known}

\subsubsection{$1/N=0$ - Generalized Brascamp--Lieb Inequalities}

Recall that when $1/N = 0$, $\Ric_{\mu,N} = \Ric_\mu$, and $\frac{N}{N-1} = 1$.  
When $(M,g)$ is Euclidean space $\Real^n$ and $\mu = \exp(-V) dx$ is a finite measure, the Brascamp--Lieb inequality \cite{BrascampLiebPLandLambda1} asserts that:
\[
\Var_{\mu}(f) \leq \int_{\Real^n}\scalar{(\nabla^2 V)^{-1} \; \nabla f , \nabla f} d\mu ~,~ \forall f \in C^1(\Real^n) ~.
\]
Observe that in this case, $\Ric_{\mu} = \nabla^2 V$, and so taking into account Remark \ref{rem:non-compact}, we see that the Brascamp--Lieb inequality follows from Case (1). The latter is easily seen to be sharp, as witnessed by testing the Gaussian measure in Euclidean space. 

The extension to the weighted-Riemannian setting for $1/N = 0$, at least when $(M,g)$ has no boundary, is well-known to experts, although we do not know who to accredit this to (see e.g. the Witten Laplacian method of Helffer--Sj\"{o}strand \cite{Helffer-DecayOfCorrelationsViaWittenLaplacian} as exposed by Ledoux \cite{LedouxSpinSystemsRevisited}). The case of a locally-convex boundary with Neumann boundary conditions (Case 1 above) can easily be justified in Euclidean space by a standard approximation argument, but this is less clear in the Riemannian setting; probably this can be achieved by employing the Bakry--\'Emery semi-group formalism (see Qian \cite{QianGradientEstimateWithBoundary} and Wang \cite{Wang2010SemiGroupOnManifoldsWithBoundary,WangYanConvexManifolds}). To the best of our knowledge, the other two Cases (2) and (3) are new even for $1/N = 0$. 

\subsubsection{$\Ric_{\mu,N} \geq \rho g$ with $\rho > 0$ - Generalized Lichnerowicz Inequalities} \label{subsec:Lich}

Assume that $\Ric_{\mu,N} \geq \rho g$ with $\rho > 0$, so that $(M,g,\mu)$ satisfies the $\CD(\rho,N)$ condition. It follows that:
\begin{equation} \label{eq:Lich-silly}
\int_M \scalar{\Ric_{\mu,N}^{-1} \; \nabla f, \nabla f} d\mu \leq \frac{1}{\rho} \int_M \abs{\nabla f}^2 d\mu ~,
\end{equation}
and so we may replace in all three cases of Theorem \ref{thm:BLN} every occurrence of the left-hand term in (\ref{eq:Lich-silly}) by the right-hand one. So for instance, Case (1) implies that: 
 \begin{equation} \label{eq:Lich-Estimate}
 \frac{N}{N-1} \Var_\mu(f) \leq \frac{1}{\rho} \int_M \abs{\nabla f}^2 d\mu ~,
 \end{equation}
and similarly for the other two cases; we refer to the resulting inequalities as Cases (1'), (2') and (3'). Clearly, Cases (1') and (2') are spectral-gap estimates for $-L$ with Neumann and Dirichlet boundary conditions, respectively.
 
Recall that in the non-weighted Riemannian setting ($\mu = \vol_M$ and $N=n$), $\Ric_{\mu,N} = \Ric_g$. In this classical setting, the above spectral-gap estimates are due to the following authors: when $\partial M = \emptyset$, Cases (1') and (3') degenerate to a single statement, due to Lichnerowicz \cite{LichnerowiczBook}, and by Obata's theorem \cite{Obata-EqualityInLichnerowicz} equality is attained if and only if $M$ is the $n$-sphere. When $\partial M \neq \emptyset$, Case (1') is due to Escobar \cite{EscobarLichnerowiczWithConvexBoundary} and independently Xia \cite{XiaLichnerowiczWithConvexBoundary} ; Case (2') is due to Reilly \cite{ReillyOriginalFormula} ; in both cases, one has equality if and only if $M$ is the $n$-hemisphere ; Case (3') seems new even in the classical case. 

On weighted-manifolds with $N \in [n,\infty]$, Case (1') is certainly known, see e.g. \cite{LiWei-SpectralGapEstimatesAndRigidityForWeightedManifolds} (in fact, a stronger log-Sobolev inequality goes back to Bakry and \'Emery \cite{BakryEmery}); Case (2') was recently obtained under a slightly stronger assumption by Ma and Du \cite[Theorem 2]{MaDuGeneralizedReilly}; for an adaptation to the $\CD(\rho,N)$ condition see Li and Wei \cite[Theorem 3]{LiWei-SpectralGapEstimatesAndRigidityForWeightedManifolds}, who also showed that in both cases one has equality if and only if $N=n$ and $M$ is the $n$-sphere or $n$-hemisphere endowed with its Riemannian volume form, corresponding to whether $\partial M$ is empty or non-empty, respectively. As already mentioned,  Case (3') seems new. 

To the best of our knowledge, the case of $N<0$ has not been previously treated in the Riemannian setting. Concurrently to posting our work on the arXiv,  Ohta \cite{Ohta-NegativeN} has also obtained Case (1') for $N < 0$ when $\partial M = \emptyset$.

\subsubsection{Generalized Bobkov--Ledoux--Nguyen Inequalities}

In the \emph{Euclidean setting} with $N \leq -1$ (and under the stronger assumption that $\Ric_\mu = \nabla^2 V > 0$),  Case (1) with a better constant of $\frac{n-N-1}{n-N}$ instead of our $\frac{N}{N-1} = \frac{-N}{-N+1}$ is due to Nguyen \cite[Proposition 10]{NguyenDimensionalBrascampLieb}, who generalized and sharpened a previous version valid for $N \leq 0$ by Bobkov--Ledoux \cite{BobkovLedouxWeightedPoincareForHeavyTails}. 
However, on a general \emph{weighted Riemannian manifold}, our constant $\frac{N}{N-1}$ is best possible in the range $N \in (-\infty,-1] \cup [n,\infty]$, see Subsection \ref{subsec:sharp-constant} below.

Note that in the Euclidean case, the $\CD(0,N)$ condition with $N \in \Real$ corresponds to Borell's class of convex measures \cite{BorellConvexMeasures}, also known as ``$1/N$-concave measures" (cf. \cite{EMilmanRotemHomogeneous}). When $N<0$, these measures are heavy-tailed, having tails decaying to zero only polynomially fast, and consequently the corresponding generator $-L$ may not have a strictly positive spectral-gap. This is compensated by the weight $\Ric_{\mu,N}^{-1}$ in the resulting Poincar\'e-type inequality. A prime example is given by the Cauchy measure in $\Real^n$, which satisfies $\CD(0,0)$ (it is $-\infty$-concave). See \cite{BobkovLedouxWeightedPoincareForHeavyTails,NguyenDimensionalBrascampLieb} for more information. 

Still in the Euclidean setting with $N \geq n$ (in fact $N > n-1$), a dimensional version of the Brascamp--Lieb inequality which is reminiscent of Case (1) was obtained by Nguyen \cite[Theorem 9]{NguyenDimensionalBrascampLieb}. 
The Bobkov--Ledoux results were obtained as an infinitesimal version of the Borell--Brascamp--Lieb inequality \cite{BorellConvexMeasures,BrascampLiebPLandLambda1} - a generalization of the Brunn-Minkowski inequality, which is strictly confined to the Euclidean setting. Nguyen's approach is already more similar to our own, dualizing an ad-hoc Bochner formula obtained for a non-stationary diffusion operator. 

In any case, our unified formulation (and treatment) of both regimes $N \leq 0$ and $N \in [n,\infty]$, the weaker assumption that $\Ric_{\mu,N} > 0$, the extension to the Riemannian setting with sharp constant $\frac{N}{N-1}$ and the treatment of the different boundary conditions in Cases (1), (2) and (3) seem new.

\subsection{Sharpness of the $\frac{N}{N-1}$ constant in the Riemannian setting} \label{subsec:sharp-constant}

We briefly comment on the sharpness of the constant $\frac{N}{N-1}$ for the range $N \in (-\infty,-1] \cup [n,\infty]$ in the more traditional setting of Case (1); the sharpness of Case (2) is also shown for $N \geq n$. This constant is no longer sharp in Case (1) for $N < 0$ with $\abs{N} \ll 1$, since under the $\CD(\rho,N)$ condition with $\rho > 0$, the spectral-gap remains bounded below as $N<0$ increases to $0$, see \cite{EMilmanNegativeDimension}. 

As described in Subsection \ref{subsec:Lich}, it is classical that equality in the Lichnerowicz estimate (\ref{eq:Lich-Estimate}) is attained by the $n$-sphere and $n$-hemisphere in Cases (1) (and (3)) and by the $n$-hemisphere in Case (2), both endowed with the usual Riemannian volume. This demonstrates the sharpness of  the constant $\frac{N}{N-1}$ when $N=n$.

For general $N \in (-\infty,-1] \cup (n,\infty]$, the sharpness may be shown as follows. Given $\rho > 0$, set $\delta = \frac{\rho}{N-1}$ and:
\[
\beta := \begin{cases} \frac{\pi}{2 \sqrt{\delta}} & \delta > 0 \\ \infty & \delta < 0 \end{cases} \; , \; \alpha := \begin{cases} -\beta & \text{Case (1)} \\ 0 & \text{Case (2)} \end{cases} .
\]
Define the following functions of $t \in [\alpha,\beta]$:
\[
R(t) := \begin{cases}\cos(\sqrt{\delta} t) & \delta > 0 \\ \cosh(\sqrt{-\delta} t) & \delta < 0 \end{cases} ~,~ \Psi_{N-1}(t) := R^{N-1}(t) ~.
\] 
If we extend our setup to include the case of one-dimensional ($n=1$) weighted manifolds, namely the case of the real line endowed with a density, then it is immediate to check that $([\alpha,\beta],\abs{\cdot},\mu = \Psi_{N-1}(t) dt)$ satisfies the $\CD(\rho,N)$ condition, since:
\[
\Ric_{\mu,N} = -(N-1) \frac{(\Psi_{N-1}^{\frac{1}{N-1}})''}{\Psi_{N-1}^{\frac{1}{N-1}}} = -(N-1) \frac{R''}{R} = (N-1) \delta = \rho ~.
\]
Note that when $n=1$, our constant $\frac{N}{N-1}$ and Nguyen's one $\frac{n-N-1}{n-N}$ coincide. As we have learned from Nguyen, his constant is sharp in the Euclidean setting for any $n \geq 1$. One consequently verifies the sharpness for $n=1$ by using the same test function used by Nguyen in \cite{NguyenDimensionalBrascampLieb}, namely $f(t) = \frac{d}{dt} R(t)$. Indeed, when $N < -1$ or $N > 1$ (to ensure convergence of the integrals below) we have:
\[
\int f(t) d\mu = \int_{-\beta}^\beta R'(t) R^{N-1}(t) dt = \frac{1}{N} \int_{-\beta}^\beta (R^N(t))' dt = 0 ~,
\]
since $\lim_{t \rightarrow \beta} R^N(t) = 0$, and since also $f(0) = R'(0) = 0$ (so that the Dirichlet boundary condition at $t=0$ is satisfied in Case (2)), we may integrate by parts:
\[
\int f^2(t) d\mu = \frac{1}{N} \int_{\alpha}^\beta R'(t) (R^N(t))' dt = -\frac{1}{N}\int_{\alpha}^\beta R''(t) R^N(t) dt = \frac{\rho}{N (N-1)} \int_{\alpha}^\beta R^{N+1}(t) dt ~.
\]
On the other hand:
\[
\int \Ric_{\mu,N}^{-1} f'(t)^2 d\mu = \frac{1}{\rho} \int_{\alpha}^\beta (R''(t))^2 R^{N-1}(t) dt = \frac{\rho}{(N-1)^2} \int_{\alpha}^\beta R^{N+1}(t) dt ~.
\]
Comparing the last two expressions, we conclude the sharpness of the constant $\frac{N}{N-1}$ for $n=1$ in Case (1) when $\abs{N} > 1$ and in Case (2) when $N > 1$ (the function $f(t)$ does not vanish at infinity when $N < 0$ so this range is excluded in Case (2)). When $N = -1$, one uses an appropriately truncated version of the above test function. In any case, to assert sharpness for a \emph{compact} weighted manifold with strictly positive density, we truncate the above construction at a finite $\beta_\eps \in (0,\beta)$, and let $\beta_\eps$ tend to $\beta$. 

To see the sharpness for $n \geq 2$, we proceed by repeating the construction from \cite{EMilmanSharpIsopInqsForCDD}, which emulates the above $1$-dimensional model space on a thin weighted $n$-dimensional manifold of revolution. For $n \geq 3$, define:
\[
\Psi_{N-n}(t) := R^{N-n}(t) ,
\]
and given $\eps > 0$, consider the $n$-dimensional manifold $M := [\alpha,\beta] \times S^{n-1}$ endowed with the metric $g_\eps$ and measure $\mu_\eps$ given by:
\begin{align*}
g_\eps & := dt^2 + \eps^2 R(t)^2 g_{S^{n-1}} ~; \\
\mu_\eps & := \Psi(t,\theta) dvol_{g_\eps}(t,\theta) ~,~  \Psi(t,\theta) = \Psi_{N-n}(t) ~,~ (t,\theta) \in [\alpha,\beta] \times S^{n-1} ~.
\end{align*}
The intuition behind this construction is that when $\eps > 0$ is small enough, the geometry of $(M,g_\eps)$ will contribute (at least) $(n-1)\delta$ to the generalized Ricci curvature tensor $\Ric_{g,\mu,N}$, and a factor of $R^{n-1}(t)$ to the density $d\mu_{\eps} \brac{(-\infty,t] \times S^{n-1}}/dt$, whereas the measure $\mu_\eps$ will contribute $(N-n) \delta g_\eps$ to the former and a factor of $\Psi_{N-n}(t) = R^{N-n}(t)$ to the latter, totaling $(N-1) \delta = \rho$ and $R^{N-1}(t) = \Psi_{N-1}(t)$, respectively. Consequently $(M,g_\eps,\mu_\eps)$ satisfies the $\CD(\rho,N)$ condition for small enough $\eps > 0$, and its measure projection onto the axis of revolution is $c_\eps \Psi_{N-1}(t)$; the sharpness of the constant then follows from our previous one-dimensional analysis. Note that in Case (2), the boundary component $\set{0} \times S^{n-1}$ is totally geodesic and hence satisfies our boundary curvature assumptions. In practice, when $N \geq n$ (and thus $\beta < \infty$), we need to ensure that the resulting compact weighted manifold is smooth at its vertices (at $t \in \set{-\beta,\beta}$ in Case (1) and $t = \beta$ in Case (2)), and this is achieved as in \cite{EMilmanSharpIsopInqsForCDD} by gluing appropriate caps. When $N \leq -1$ (and thus $\beta = \infty$), in order to obtain a compact manifold as in the formulation of Theorem \ref{thm:BLN}, we also need to truncate the above construction at a finite $\beta_\eps > 0$; the resulting boundary $\set{-\beta_\eps,\beta_\eps} \times S^{n-1}$ turns out to indeed be locally convex since $R'(\beta_\eps) = - R'(-\beta_\eps) > 0$, according to the calculation in \cite{EMilmanSharpIsopInqsForCDD}. The construction is even more complicated for the case $n=2$; we refer to \cite{EMilmanSharpIsopInqsForCDD} for further precise details and rigorous justifications. 

\subsection{Proof of Theorem \ref{thm:BLN}}

\begin{proof}[Proof of Theorem \ref{thm:BLN}]
Plugging (\ref{eq:Bakry-CS}) into the generalized Reilly formula, we obtain for any $u \in \S_N(M)$:
\begin{multline}
\label{eq:CD-Reilly-BLN}
\frac{N-1}{N} \int_M (L u)^2 d\mu \geq \int_M \scalar{\Ric_{\mu,N} \; \nabla u, \nabla u} d\mu + \\
\int_{\partial M} H_\mu (u_\nu)^2 d\mu_{\partial M} + \int_{\partial M} \scalar{\II_{\partial M}  \;\nabla_{\partial M} u,\nabla_{\partial M} u} d\mu_{\partial M} - 2 \int_{\partial M} \scalar{\nabla_{\partial M} u_\nu, \nabla_{\partial M} u} d\mu_{\partial M} ~.
\end{multline}
Recall that this remains valid for $u \in \S_0(M)$ if $u$ or $u_\nu$ are constant on $\partial M$. 
Lastly, note that if $Lu = f$ in $M$ with $f \in C^{1}(M)$ and $u \in \S_0(M)$, then:
\begin{equation} \label{eq:base-BLN}
  \int_{M} f^2 d\mu = \int_M (Lu)^2 d\mu = \int_M f Lu \; d\mu = -\int_M \scalar{\nabla f ,\nabla u} d\mu + \int_{\partial M} f u_\nu d\mu_{\partial M} ~.
\end{equation}
Consequently, by Cauchy--Schwarz:
\begin{equation} \label{eq:duality-BLN}
\int_M f^2 d\mu \leq \brac{\int_M \scalar{\Ric_{\mu,N} \; \nabla u, \nabla u} d\mu}^{1/2} \brac{ \int_M \scalar{\Ric_{\mu,N}^{-1} \; \nabla f, \nabla f} d\mu}^{1/2} + \int_{\partial M} f u_\nu d\mu_{\partial M} ~.
\end{equation}
We now proceed to treat the individual three cases. 

\begin{enumerate}
\item
Assume that $\int_M f d\mu = 0$ and solve the Neumann Poisson problem  for $u \in \S_0(M)$:
\[
Lu =f \text{ on $M$ } ~,~ u_\nu \equiv 0 \text{ on $\partial M$} ~;
\]
note that the compatibility condition $\int_{\partial M} u_\nu d\mu_{\partial M} = \int_M f d\mu = 0$ is indeed satisfied, so a solution exists.
Since $u_\nu|_{\partial M} \equiv 0$ and $\II_{\partial M}\geq 0$, we obtain from (\ref{eq:CD-Reilly-BLN}):
\begin{equation} \label{eq:CD-BLN}
\frac{N}{N-1}\int_M \scalar{\Ric_{\mu,N} \; \nabla u, \nabla u} d\mu \leq \int_M (Lu)^2 d\mu =  \int_M f^2 d\mu ~.
 \end{equation}
Plugging this back into (\ref{eq:duality-BLN}) and using that $u_\nu \equiv 0$ yields the assertion of Case (1). 
 
\item
Assume that $f|_{\partial M} \equiv 0$ and solve the Dirichlet Poisson problem for $u \in \S_0(M)$:
\[
Lu =f \text{ on $M$ } ~,~ u \equiv 0 \text{ on $\partial M$} ~.
\]
Observe that (\ref{eq:CD-BLN}) still holds since $u|_{\partial M} \equiv 0$ and $H_{\mu} \geq 0$. Plugging (\ref{eq:CD-BLN}) back into  (\ref{eq:duality-BLN}) and using that $f|_{\partial M}\equiv 0$ yields the assertion of Case (2). 

\item
Assume that $\int_M f d\mu = 0$ and solve the Dirichlet Poisson problem:
 \[
Lu =f \text{ on $M$ } ~,~ u \equiv 0 \text{ on $\partial M$} ~.
\]
The difference with the previous case is that the $\int f u_\nu d\mu_{\partial M}$ term in (\ref{eq:base-BLN}) does not vanish since we do not assume that $f|_{\partial M} \equiv 0$. Consequently, we cannot afford to omit the positive contribution of $\int_{\partial M} H_{\mu} (u_\nu)^2 d\mu_{\partial M}$ in (\ref{eq:CD-Reilly-BLN}):
\[
\frac{N-1}{N} \int_M f^2 d\mu \geq \int_M \scalar{\Ric_{\mu,N} \; \nabla u, \nabla u} d\mu + \int_{\partial M} H_\mu u_\nu^2 d\mu_{\partial M} ~.
 \]
 Applying the duality argument, this time in additive form, we obtain for any $\lambda > 0$:
 \begin{eqnarray*}
 \int_M f^2 d\mu & = & -\int_M \scalar{\nabla f,\nabla u} d\mu + \int_{\partial M} f u_\nu d\mu_{\partial M} \\
 &\leq & \frac{1}{2 \lambda} \int_M \scalar{\Ric_{\mu,N}^{-1} \; \nabla f, \nabla f} d\mu + \frac{\lambda}{2} \int_M\scalar{\Ric_{\mu,N} \; \nabla u, \nabla u} d\mu + \int_{\partial M} f u_\nu d\mu_{\partial M} ~.
 \end{eqnarray*}
 Since $\int_{\partial M} u_\nu d\mu_{\partial M} = \int_M f d\mu = 0$, we may as well replace the last term by $\int_{\partial M}  (f-C) u_\nu d\mu_{\partial M}$. 
 Plugging in the previous estimate and applying the Cauchy--Schwarz inequality again to eliminate $u_\nu$, we obtain:
 \begin{align*}
 & \brac{1 - \frac{\lambda}{2} \frac{N-1}{N}} \int_M f^2 d\mu \\
 & \leq \frac{1}{2 \lambda} \int _M\scalar{\Ric_{\mu,N}^{-1} \; \nabla f, \nabla f} d\mu + \int_{\partial M}  (f-C) u_\nu d\mu_{\partial M} - \frac{\lambda}{2} \int_{\partial M} H_\mu u_\nu^2 d\mu_{\partial M} \\
 & \leq \frac{1}{2 \lambda} \int_M \scalar{\Ric_{\mu,N}^{-1} \; \nabla f, \nabla f} d\mu + \frac{1}{2 \lambda} \int_{\partial M} \frac{1}{H_\mu} (f - C)^2  d\mu_{\partial M} ~.
 \end{align*}
 Multiplying by $2 \lambda$ and using the optimal $\lambda = \frac{N}{N-1}$, we obtain the assertion of Case (3).  \end{enumerate}
 \end{proof}

\section{Generalized Veysseire Spectral-gap inequality on convex $M$} \label{sec:Vey}

The next result was recently obtained by L. Veysseire \cite{VeysseireSpectralGapEstimateCRAS} for compact weighted-manifolds without boundary. It may be thought of as a spectral-gap version of the Generalized Brascamp--Lieb inequality. We provide an extension in the case that $M$ is locally convex.

\begin{thm}[Veysseire Spectral-Gap inequality with locally-convex boundary] \label{thm:Veysseire}
Assume that as $2$-tensors on $M$:
\[
\Ric_\mu \geq \rho g ~,
\]
for some measurable function $\rho : M \rightarrow \Real_+$. Then 
for any $f \in C^{1}(M)$:

\begin{enumerate}
\item (Neumann Veysseire inequality on locally convex domain)

Assume that $\II_{\partial M}\geq 0$ ($M$ is locally convex). Then:
 \[
 \Var_\mu(f) \leq \dashint_M \frac{1}{\rho} d\mu \; \int_M \abs{\nabla f}^2 d\mu.
 \]

 \end{enumerate}
\end{thm}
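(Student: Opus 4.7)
The plan is to adapt L.~Veysseire's closed-manifold argument to this setting, with the local convexity $\II_{\partial M} \geq 0$ combined with the Neumann boundary condition providing the correct sign for the boundary contributions that appear when integrating by parts. By the variational characterization of the Neumann spectral gap (and since compactness ensures a discrete spectrum with $C^1$ eigenfunctions by the elliptic regularity stated above), the asserted Poincar\'e inequality is equivalent to the harmonic-mean lower bound $\lambda_1^N \geq \mu(M)/\int_M (1/\rho)\,d\mu$. It therefore suffices to take $u$ to be a first nontrivial Neumann eigenfunction of $-L$, so $Lu = -\lambda_1 u$, $u_\nu = 0$ on $\partial M$, $\int_M u\, d\mu = 0$, and establish this spectral-gap bound.

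The first main step is a pointwise Bochner--Kato estimate. Applying the Bochner--Lichnerowicz identity that underlies Theorem~\ref{thm:Reilly} to $u$ and using $\nabla Lu = -\lambda_1 \nabla u$ yields
\[
\tfrac{1}{2} L|\nabla u|^2 = \norm{\nabla^2 u}^2 + \Ric_\mu(\nabla u, \nabla u) - \lambda_1 |\nabla u|^2 \geq \norm{\nabla^2 u}^2 + (\rho - \lambda_1)|\nabla u|^2 .
\]
Combining with Kato's inequality $|\nabla |\nabla u||^2 \leq \norm{\nabla^2 u}^2$ and the identity $\tfrac{1}{2} L|\nabla u|^2 = |\nabla u|\, L|\nabla u| + |\nabla |\nabla u||^2$, I obtain the pointwise bound $L|\nabla u| \geq (\rho - \lambda_1)|\nabla u|$ on $\{|\nabla u| > 0\}$.

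The second step extracts the boundary sign from local convexity. Since $u_\nu \equiv 0$ on $\partial M$, the vector $\nabla u$ is tangent to $\partial M$ there; tangential differentiation of $u_\nu = 0$ gives $\nabla^2 u(\nu, X) = -\II_{\partial M}(X, \nabla u)$ for $X \in T\partial M$, and contracting with $\nabla u$ produces $(|\nabla u|^2)_\nu = -2\, \II_{\partial M}(\nabla_{\partial M} u, \nabla_{\partial M} u) \leq 0$ by $\II_{\partial M} \geq 0$. Hence $(|\nabla u|)_\nu \leq 0$ on $\{|\nabla u| > 0\} \cap \partial M$, and the weighted integration by parts (applied to the regularization $\sqrt{|\nabla u|^2 + \epsilon^2}$, then passing $\epsilon \to 0^+$ to handle the critical set) gives $\int_M L|\nabla u|\,d\mu \leq 0$. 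Integrating the pointwise inequality therefore yields
\[
\int_M \rho\, |\nabla u|\, d\mu \; \leq \; \lambda_1 \int_M |\nabla u|\, d\mu .
\]

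The final step converts this weighted average bound on $\rho$ into the harmonic-mean spectral-gap bound via Cauchy--Schwartz. Specifically, combining the above with the companion integrated Bochner inequality $\int_M \rho\,|\nabla u|^2 d\mu \leq \lambda_1 \int_M |\nabla u|^2 d\mu$ (obtained by the same Reilly-based argument) and inserting these bounds into the Cauchy--Schwartz estimate
\[
\Big(\int_M h\, d\mu\Big)^{\!2} \;\leq\; \int_M \rho\, h^2\, d\mu \cdot \int_M \frac{1}{\rho}\, d\mu
\]
with an appropriate choice of $h$ (following Veysseire's argument in \cite{VeysseireSpectralGapEstimateCRAS}) delivers $\lambda_1 \int_M (1/\rho)\,d\mu \geq \mu(M)$, which is precisely the asserted inequality. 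The main obstacles I anticipate are (i) justifying the integration by parts at the critical set of $u$, handled by the standard $\epsilon$-regularization of $|\nabla u|$, and (ii) the merely measurable nature of $\rho$, which can be addressed by approximating $\rho$ from below by smooth positive $\rho_k \nearrow \rho$ preserving $\Ric_\mu \geq \rho_k g$ and invoking monotone convergence on $\int_M (1/\rho_k) d\mu$.
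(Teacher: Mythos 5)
Your setup is sound and close in spirit to the paper's argument: the Kato inequality $\abs{\nabla\abs{\nabla u}} \leq \norm{\nabla^2 u}$ fed into Bochner/Reilly, the choice of $u$ as a first Neumann eigenfunction, and the sign $(\abs{\nabla u}^2)_\nu = -2\,\II_{\partial M}(\nabla_{\partial M}u,\nabla_{\partial M}u) \leq 0$ coming from local convexity are all exactly the right ingredients (the paper packages the boundary sign inside the Reilly formula rather than deriving it by hand, but that is cosmetic). The problem is the final step: the two integrated inequalities you propose to feed into Cauchy--Schwartz, namely $\int_M \rho h\,d\mu \leq \lambda_1 \int_M h\,d\mu$ and $\int_M \rho h^2\,d\mu \leq \lambda_1 \int_M h^2\,d\mu$ with $h=\abs{\nabla u}$, do not imply the harmonic-mean bound. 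Indeed, combining the second with $\brac{\int_M h\,d\mu}^2 \leq \int_M \rho h^2\,d\mu \int_M \tfrac{1}{\rho}\,d\mu$ only gives $\lambda_1 \geq \frac{(\int_M h\,d\mu)^2}{\int_M h^2\,d\mu}\cdot\frac{1}{\int_M (1/\rho)\,d\mu}$, and $(\int_M h\,d\mu)^2 \leq \mu(M)\int_M h^2\,d\mu$ goes the \emph{wrong} way, so this is strictly weaker than $\mu(M)/\int_M(1/\rho)\,d\mu$ unless $h$ is constant. Both of your inequalities only bound $\lambda_1$ below by $h$-weighted \emph{averages} of $\rho$, which can be smaller than the $\mu$-harmonic mean when $\rho$ is small precisely where $h$ concentrates.

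The missing idea is Veysseire's bootstrap: you must not integrate $Lh \geq (\rho-\lambda_1)h$ directly (your first-moment step discards the gradient of $h$), but rather pair it with $h$, i.e.\ integrate $h\,Lh$ by parts keeping the term $\int_M \abs{\nabla h}^2\,d\mu$, which gives $\lambda_1 \int_M h^2\,d\mu \geq \int_M \abs{\nabla h}^2\,d\mu + \int_M \rho h^2\,d\mu$. Then apply the Neumann Poincar\'e inequality \emph{with the same constant} $\lambda_1$ to the auxiliary function $h$ itself: $\int_M \abs{\nabla h}^2\,d\mu \geq \lambda_1\brac{\int_M h^2\,d\mu - \frac{1}{\mu(M)}\brac{\int_M h\,d\mu}^2}$. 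The $\lambda_1\int_M h^2\,d\mu$ terms cancel and one is left with $\int_M \rho h^2\,d\mu \leq \lambda_1 \brac{\int_M h\,d\mu}^2/\mu(M)$, which is exactly the strengthening needed for your Cauchy--Schwartz step to close. Your regularization remarks for the critical set and for measurable $\rho$ are fine, but without this self-application of the Poincar\'e inequality to $\abs{\nabla u}$ the argument does not reach the stated conclusion.
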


\begin{rem}
We do not know whether the analogous results for Dirichlet or Neumann boundary conditions (Cases (2) and (3) in the previous section) hold on a generalized mean-convex domain, as the proof given below breaks down in those cases. 
\end{rem}

\begin{rem}
As in Veysseire's work \cite{VeysseireSpectralGapEstimateCRAS}, further refinements are possible. For instance, if in addition the $\CD(\rho_0,N)$ condition is satisfied for $\rho_0 > 0$ and $1/N \in [-\infty,1/n]$, then one may obtain an estimate on the corresponding spectral-gap $\lambda_1^N$ of the form:
\[
\lambda_1^N \geq \frac{N}{N-1} \rho_0 + \frac{1}{\dashint_M \frac{1}{\rho - \rho_0} d\mu} ~.
\]
As explained in \cite{VeysseireSpectralGapEstimateCRAS}, this may be obtained by using an appropriate convex combination of the Lichnerowicz estimate (Case (1) of Theorem \ref{thm:intro-BLN} after replacing $Ric_{\mu,N}^{-1}$ with $1/\rho_0$)  and the estimates obtained in this section, with a final application of the Cauchy--Schwarz inequality. 
Similarly, it is possible to interpolate between the Lichnerowicz estimates and the Dimensional Brascamp--Lieb ones of Theorem \ref{thm:intro-BLN}. We leave this to the interested reader. 
\end{rem}

Veysseire's proof in \cite{VeysseireSpectralGapEstimateCRAS} is based on the Bochner formula and the following observation, valid for any $u \in C^2(M)$ at any point so that $\nabla u \neq 0$:
\begin{equation} \label{eq:Vey}
\norm{D^2 u} \geq \abs{\nabla \abs{\nabla u}} ~.
\end{equation}
At a point where $\nabla u = 0$, we define $\abs{\nabla \abs{\nabla u}} := 0$. 

\begin{proof}[Proof of Theorem \ref{thm:Veysseire}]
Plugging (\ref{eq:Vey}) into the generalized Reilly formula and integrating the $\int_M (L u)^2 d\mu$ term by parts, we obtain for any $u \in \S_N(M)$ so that $L u \in C^1(M)$:
\begin{multline} \label{eq:VeyReilly}
\int_{\partial M} u_\nu Lu d\mu - \int_M \scalar{\nabla u,\nabla Lu} d\mu \geq \int_M \abs{\nabla \abs{\nabla u}}^2 d\mu +  \int_M \scalar{ \Ric_\mu \; \nabla u, \nabla u} d\mu + \\
\int_{\partial M} H_\mu (u_\nu)^2 d\mu_{\partial M} + \int_{\partial M} \scalar{\II_{\partial M}  \;\nabla_{\partial M} u,\nabla_{\partial M} u} d\mu_{\partial M} - 2 \int_{\partial M} \scalar{\nabla_{\partial M} u_\nu, \nabla_{\partial M} u} d\mu_{\partial M} ~.
\end{multline}

Let $u \in \S_N(M)$ denote an eigenfunction of $-L$ with zero Neumann boundary conditions corresponding to $\lambda_1^N$, so that in particular $Lu = -\lambda_1^N u \in C^1(M)$, and denote $h = \abs{\nabla u} \in H^1(\mu)$. Applying (\ref{eq:VeyReilly}) to $u$, using that $\II_{\partial M} \geq 0$, and that $\int_{\set{h=0}} \abs{\nabla h}^2 d\mu = 0$ for any $h \in H^1(\mu)$, we obtain:
\[
\lambda_1^N \int_M h^2 d\mu \geq \int_M \abs{\nabla h}^2 d\mu + \int_M \rho h^2 d\mu ~.
\]
Applying the Neumann Poincar\'e inequality to the function $h$, we obtain:
\[
\lambda_1^N \int_M h^2 d\mu \geq \lambda_1^N \brac{ \int_M h^2 d\mu - \frac{1}{\mu(M)} (\int_M h d\mu)^2} + \int_M \rho h^2 d\mu ~.
\]
It follows by Cauchy--Schwarz that:
\[
\lambda_1^N \geq \frac{\mu(M) \int_M \rho h^2 d\mu}{(\int_M h d\mu)^2} \geq \frac{\mu(M)}{\int_M \frac{1}{\rho} d\mu} ~,
\]
concluding the proof.

\end{proof}

\begin{rem}
The proof above actually yields a meaningful estimate on the spectral-gap $\lambda_1^N$ even when $\II_{\partial M}$ is negatively bounded from below. However, this estimate depends on upper bounds on $\abs{\nabla u}$, where $u$ is the first non-trivial Neumann eigenfunction, both in $M$ and on its boundary. 
\end{rem}

\vspace{-20pt}

\bibliographystyle{plain}
\bibliography{../../ConvexBib}

\end{document}